\documentclass[11pt]{article}

\usepackage[T1]{fontenc} 
\usepackage{amsmath,amsfonts,amssymb,amsthm,mathtools}
\usepackage{float}
\usepackage{fullpage}
\usepackage{hyperref,cleveref}
\usepackage{tikz,graphicx,xcolor,caption,subcaption} 

\allowdisplaybreaks 

\usetikzlibrary{calc}

\definecolor{matchcolor}{HTML}{20252B}
\definecolor{kzerocolor}{HTML}{1479B8}
\definecolor{konecolor}{HTML}{D97706}

\newtheorem{theorem}{Theorem}[section]
\newtheorem{lemma}[theorem]{Lemma}
\newtheorem{corollary}[theorem]{Corollary}
\newtheorem{proposition}[theorem]{Proposition}
\theoremstyle{definition}

\theoremstyle{remark}

\newcommand{\BlockPairDiagram}{%
\begin{tikzpicture}[x=1.5cm,y=0.60cm,line cap=round,line join=round]
  \foreach \side/\x/\name in {L/0/i,R/1/{i+1}}{
    \coordinate (\side-00) at (\x,0);
    \coordinate (\side-01) at (\x,1);
    \coordinate (\side-10) at (\x,2);
    \coordinate (\side-11) at (\x,3);
    \node[font=\small] at (\x,3.72) {$B_{\name}$};
  }
  \foreach \u in {00,10}{\foreach \v in {00,01}{
    \draw[kzerocolor,line width=0.72pt] (L-\u)--(R-\v);
  }}
  \foreach \u in {01,11}{\foreach \v in {10,11}{
    \draw[konecolor,line width=0.72pt] (L-\u)--(R-\v);
  }}
  \foreach \side in {L,R}{
    \draw[matchcolor,line width=1.05pt] (\side-00)--(\side-01);
    \draw[matchcolor,line width=1.05pt] (\side-10)--(\side-11);
  }
  \foreach \lab in {00,01,10,11}{
    \fill[black] (L-\lab) circle[radius=0.047];
    \fill[black] (R-\lab) circle[radius=0.047];
    \node[font=\scriptsize,anchor=east,xshift=-3pt] at (L-\lab) {$\lab$};
    \node[font=\scriptsize,anchor=west,xshift=3pt] at (R-\lab) {$\lab$};
  }
\end{tikzpicture}%
}

\newcommand{\GraphRing}[1]{%
\begin{tikzpicture}[line cap=round,line join=round]
  \pgfmathtruncatemacro{\NB}{2*#1}
  \pgfmathtruncatemacro{\LAST}{\NB-1}
  \pgfmathsetmacro{\RR}{0.42+0.105*#1}
  \foreach \i in {0,...,\LAST}{
    \pgfmathsetmacro{\ang}{90-360*\i/\NB}
    \coordinate (C-\i) at (\ang:\RR);
    \begin{scope}[shift={(C-\i)},rotate={\ang-90}]
      \coordinate (v-\i-0-0) at (0,-0.27);
      \coordinate (v-\i-0-1) at (0,-0.09);
      \coordinate (v-\i-1-0) at (0, 0.09);
      \coordinate (v-\i-1-1) at (0, 0.27);
    \end{scope}
    \coordinate (L-\i) at (\ang:{\RR+0.58});
  }
  \foreach \i in {0,...,\LAST}{
    \pgfmathtruncatemacro{\j}{mod(\i+1,\NB)}
    \foreach \a in {0,1}{\foreach \c in {0,1}{
      \draw[kzerocolor!78,line width=0.34pt]
        (v-\i-\a-0)--(v-\j-0-\c);
      \draw[konecolor!82,line width=0.34pt]
        (v-\i-\a-1)--(v-\j-1-\c);
    }}
  }
  \foreach \i in {0,...,\LAST}{
    \foreach \a in {0,1}{
      \draw[matchcolor,line width=0.82pt]
        (v-\i-\a-0)--(v-\i-\a-1);
    }
    \foreach \a in {0,1}{\foreach \b in {0,1}{
      \fill[black] (v-\i-\a-\b) circle[radius=0.038];
    }}
    \node[font=\scriptsize] at (L-\i) {$B_{\i}$};
  }
\end{tikzpicture}%
}

\title{Induced-saturated graphs exist for even cycles}
\author{
{{Ilkyoo Choi}}\thanks{
\footnotesize {Department of Mathematics, Hankuk University of Foreign Studies, Yongin-si, Gyeonggi-do, Republic of Korea.
 E-mail: \texttt {ilkyoo@hufs.ac.kr},
  Discrete Mathematics Group, Institute for Basic Science (IBS), Daejeon, Republic of Korea, and Center for AI and Natural Sciences, Korea Institute for Advanced Study (KIAS), Seoul, Republic of Korea.
Supported by the Hankuk University of Foreign Studies Research Fund, the National Research Foundation of Korea (NRF) grant funded by the Korea government (MSIT) (RS-2025-23324220), Institute for Basic Science (IBS-R029-C1), and the Korea Institute for Advanced Study (KIAS) grant funded by the Korea government.
}}}
\date{}

\begin{document}
\maketitle

\begin{abstract}
A graph $G$ is \emph{$H$-induced-saturated} if $G$ has no induced subgraph isomorphic to $H$ but changing the adjacency of an arbitrary pair of vertices in $G$ creates an induced copy of $H$. 
The existence problem for $H$-induced-saturated graphs had previously been settled when $H$ is a complete graph, a path, an odd cycle, or an even cycle of length at most $10$. 
In this paper, for every integer $q\ge3$, we construct a $C_{2q+2}$-induced-saturated graph.
Hence, induced-saturated graphs exist for all cycles, except for the cycle of length 3.
\end{abstract}

\section{Introduction}

All graphs in this paper are finite and simple, which means no loops and no parallel edges. 
Let $K_n, P_n, C_n$ denote the complete graph, path, cycle, respectively, on $n$ vertices. 
Given a graph $G$, let $V(G)$ and $E(G)$ denote its vertex set and edge set, respectively. 

Given a graph $H$, a graph $G$ is \emph{$H$-saturated} if $G$ has no subgraph isomorphic to $H$, but adding an edge between a pair of non-adjacent vertices in $G$ creates a subgraph isomorphic to $H$. 
Note that removing an edge of $G$ cannot create a copy of $H$ if $G$ has no copy of $H$. 
Adding edges between pairs of non-adjacent vertices of a graph on at least $|V(H)|$ vertices with no copy of $H$ will eventually create a copy of $H$, so there exists an $H$-saturated graph for every $H$. 
Naturally, the next question is to ask for the maximum or minimum edge density of $H$-saturated graphs.
We direct the readers to a monograph by Bollob\'as~\cite{2004Bollobas} for the literature on this topic; see also the surveys~\cite{2011CuFaFaSc,2013FuSi}. 

The notion of $H$-induced-saturated graphs had been studied earlier, but the term was coined by Axenovich and Csik\'{o}s~\cite{2019AxCs}. 
Given a graph $H$, a graph $G$ is \emph{$H$-induced-saturated} if $G$ has no induced subgraph isomorphic to $H$, but changing the adjacency of an arbitrary pair of vertices in $G$ creates an induced subgraph isomorphic to $H$. 
In other words, $G$ has no induced copy of $H$, every addition of an edge between a pair of non-adjacent vertices of $G$ creates an induced copy of $H$ and every deletion of an edge of $G$ creates an induced copy of $H$.
In contrast to $H$-saturated graphs, which always exist on sufficiently many vertices, $H$-induced-saturated graphs need not exist. 
For example, no $K_n$-induced-saturated graph exists for $n\geq 3$ since removing an edge cannot create a new copy of $K_n$. 

The existence of induced-saturated graphs for paths is completely settled. 
The empty graph on at least two vertices is $P_2$-induced-saturated, and the disjoint union of complete graphs where each component has at least three vertices is $P_3$-induced-saturated. 
Perhaps surprisingly, Martin and Smith~\cite{2012MaSm} observed that $P_4$-induced-saturated graphs do not exist.
Shortly after Axenovich and Csik\'os~\cite{2019AxCs} explicitly asked the question of existence of induced-saturated graphs for paths, R\"{a}ty~\cite{2020Raty} proved that a $P_6$-induced-saturated graph exists. 
Cho, Choi, and Park~\cite{2021ChChPa} showed that induced-saturated graphs exist for infinitely many paths; namely, they constructed an infinite sequence of graphs $G_n$ where each $G_n$ is $P_{3n}$-induced-saturated for a positive integer $n$. 
They also showed that the Kneser graph $K(n, 2)$ is $P_6$-induced-saturated for every $n\geq 5$.
Inspired by this construction, Dvo\v{r}\'{a}k~\cite{2020Dvorak} proved that induced-saturated graphs exist for all paths on at least six vertices. 
The last case of $P_5$ was settled by Bonamy et al.~\cite{unpub_BoGrJoMoSc} via a computer search, finding five $P_5$-induced-saturated graphs.

The next natural family of graphs to investigate is the family of cycles. 
Recall that there is no $C_3$-induced-saturated graph since $C_3$ is isomorphic to $K_3$. 
Behrens et al.~\cite{2016BeErSaYaYe} showed that the line graph of the complete bipartite graph $K_{r,r}$ is $C_{2r-1}$-induced-saturated when $r\geq 3$. 
They also constructed a $C_4$-induced-saturated graph. 
Recently, constructions for $C_{2r}$-induced-saturated graphs were discovered for $r\in\{3,4,5\}$, see~\cite{arXiv_2025FaHaHaSp}.
It remained unknown whether induced-saturated graphs exist for the other even cycles.
In this paper, we completely resolve this question by constructing a $C_{2q+2}$-induced-saturated graph for every $q\geq 3$. 
Together with the aforementioned results, this completes the existence problem for cycles: $C_3$ is the only cycle for which no induced-saturated graph exists.

\begin{theorem}\label{thm:main}
For every integer $q\ge3$, there is a $C_{2q+2}$-induced-saturated graph $G_q$.
\end{theorem}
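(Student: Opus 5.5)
We take $G_q$ to be the ring graph depicted above. There are $2q$ blocks $B_0,\dots,B_{2q-1}$, indexed cyclically modulo $2q$, and each block $B_i$ has four vertices $(i,ab)$ with $a,b\in\{0,1\}$. The edges are of two kinds. \emph{Matching edges} are $(i,a0)(i,a1)$ inside each block. \emph{Cross edges} join $(i,ab)$ to $(i+1,bc)$ for all $a,b,c$: blue edges when $b=0$, orange edges when $b=1$. The proof then has three parts. (1) $G_q$ has no induced $C_{2q+2}$. (2) Deleting any edge creates an induced $C_{2q+2}$. (3) Adding any non-edge creates an induced $C_{2q+2}$. By the symmetry $i\mapsto i+1$ and the bit-complement symmetry $ab\mapsto \bar a\bar b$, which swaps the two colours, only a few edge and non-edge types need checking.

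The central tool is the projection $\pi\colon V(G_q)\to\mathbb{Z}_{2q}$, $(i,ab)\mapsto i$. Matching edges do not change $\pi$, and cross edges change it by $\pm1$. For an induced cycle $C$, let $w(C)$ be its winding number and $m(C)$ its number of matching edges.

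First I would handle cycles with $w(C)=0$. Each $\pi$-turning point is either at a matching edge or reverses direction between $B_i$ and $B_{i\pm1}$. The rule that $(i,ab)$ sees exactly the pair $\{(i+1,b0),(i+1,b1)\}$ and the pair $\{(i-1,0a),(i-1,1a)\}$ forces chords near any turn. From this I expect that such cycles span at most a bounded number of consecutive blocks, so they have bounded length (say at most $6$), hence are shorter than $2q+2\ge8$.

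Next I would handle winding cycles. An induced cycle with $w(C)=\pm1$ must use at least $2q$ cross edges. The label-shift rule $ab\to bc$ then shows that the blocks visited twice are exactly where matching edges are used. I would show that an induced winding cycle has length exactly $2q+m(C)$ with $m(C)$ even and $m(C)\in\{0,4,\dots\}$, or more modestly that $m(C)\neq2$. Two matching edges would give length $2q+2$, and I expect the bit constraints around the ring to force a chord in that case. This parity or bit-tracking argument is where I expect the main difficulty. The fallback is to track the sequence of second bits around the cycle and show it must close up consistently, which fails precisely when the count of matching edges is $2$. Winding number $\geq2$ would give length $\ge4q>2q+2$ and is excluded by the first part.

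For the saturation parts I would exhibit explicit cycles. When a matching edge $(i,a0)(i,a1)$ is deleted, I would take a winding cycle that uses two vertices of $B_i$ and is otherwise induced except for that one chord, so that deletion leaves an induced $C_{2q+2}$. When a cross edge is deleted, I would do the same with a winding route that uses one matching edge elsewhere and the removed edge as the only chord. When a non-edge is added, I would split by $d=\pi$-distance: $d=0$ (inside a block), $d=1$ (the wrong-label pairs between consecutive blocks), and $d\ge2$. In each case the added edge should act as a shortcut or as a replacement for a matching edge, producing an induced $C_{2q+2}$ whose remaining chords all involve vertices already checked absent by the structure lemma above. The verification is routine but case-heavy. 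The real risk is that the definition of $G_q$ needs adjusting, for example in the number of blocks, so that the length bookkeeping in parts (1)–(3) lands exactly on $2q+2$.
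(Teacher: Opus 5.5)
Your part (1) rests on a false claim. Writing $i^{ab}$ for your $(i,ab)$, the sequence $0^{00},1^{00},2^{00},3^{00},2^{10},1^{11},0^{01}$ is an induced $C_7$ with winding number $0$. It turns at the matching edge $0^{00}0^{01}$ and at the single vertex $3^{00}$ without any chord. Lengthening the two strands, keeping their second bits different until the block before they merge, gives induced $C_{2t+1}$ for every $2\le t\le 2q-2$. So turns do not force chords, and non-winding induced cycles are not short. What is true, and what you need, is this: a non-winding induced cycle either uses every connection at least twice (length at least $4q$), or it misses a connection, in which case it is odd or a $C_6$ inside two consecutive blocks. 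Proving the second alternative is the real work. Let $b_i,c_i,m_i$ be the numbers of cycle vertices in $B_i$, cycle edges in the $i$th connection, and cycle matching edges in $B_i$. One needs the degree count $2b_i=c_{i-1}+c_i+2m_i$, so all $c_i$ have the same parity. One also needs the bound $c_i\le 4$, with equality only for that $C_6$, and the fact that $c_{i-1}=c_i=2$ forbids a cycle matching edge in $B_i$. In the winding case you tacitly assume exactly $2q$ cross edges. But one connection with three cross edges and no matching edges also gives length $2q+2$, and needs its own argument: then $b_j=b_{j+1}=2$, the two cycle vertices of $B_{j+1}$ have different first bits, and each vertex of $B_j$ sees only one of them, so $c_j\le 2$. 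The case $m(C)=2$ is left as an expectation. The actual reason is that when all $c_i=1$, a block containing a cycle matching edge contains only that pair $i^{a0},i^{a1}$. Their neighbourhoods in $B_{i-1}$ coincide, which forces $c_{i-1}$ to be even, so in fact $m(C)=0$ there.

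For parts (2) and (3) you give no certificates, and the ones you sketch for deletion cannot exist. The degree count, and hence the common parity of the $c_i$, still holds for an induced cycle $C$ of $G_q-e$, and $C$ must contain both ends of $e$. A winding certificate therefore has all $2q$ values $c_i$ odd, so $\sum_i c_i$ is even and $m(C)$ is even. This already rules out your ``winding route that uses one matching edge'' for a deleted cross edge. The two remaining options are all $c_i=1$ with $m(C)=2$, or one $c_j=3$ with $m(C)=0$. Both fail for both edge types by the arguments above. For instance, take $e=0^{00}0^{01}$. The degree count at $B_0$ either forces the other matching pair of $B_0$ into $C$ too, giving too many vertices in $B_0$. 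Or it gives $V(C)\cap B_0=\{0^{00},0^{01}\}$, and then the common neighbourhood of these two vertices in $B_{2q-1}$ makes $c_{2q-1}$ even. So deletion certificates must have winding number $0$, as the paper's $\sigma_M$ and $\sigma_C$ do. Each traverses an arc of consecutive blocks out and back, turning only at its two ends, so every connection of the arc is used exactly twice and the deleted edge is the only chord. Finally, rotation plus global complement is too small a symmetry group to leave only a few cases: it leaves four orbits of cross edges and several orbits of non-edges at each block distance. The paper's key reduction is the family of local relabellings $\rho_\varepsilon$. These flip, independently for each $j$, the first bits in $B_j$ together with the second bits in $B_{j-1}$. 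They reduce everything to two edge types and $q+2$ non-edge types, each settled by one explicit induced path or cycle, with sporadic paths for $(q,d)\in\{(3,3),(4,3),(5,5)\}$.
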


We prove \Cref{thm:main} in the next four sections. 
In \Cref{sec:properties}, we define $G_q$ and reveal some structural properties of $G_q$.
We show that $G_q$ has no induced copy of $C_{2q+2}$ in \Cref{sec:nocycle}.
In \Cref{sec:addedge} and \Cref{sec:deleteedge}, we show that adding an edge between a pair of non-adjacent vertices and deleting an edge, respectively, of $G_q$ always creates an induced copy of $C_{2q+2}$.

\section{Properties of the graph $G_q$}
\label{sec:properties}

We first explain the construction. 
Fix $q\ge3$ and let $n=8q$.
We now define the graph $G_q$ on $n$ vertices: 
let $V(G_q)=\{i^{ab}:i\in\mathbb Z_{2q},\ a,b\in\{0,1\}\}$, and define the $i$th \emph{block} $B_i$ to be
$\{i^{00},i^{01},i^{10},i^{11}\}$. 
We often refer to $a,b$ as \emph{bits}. 
For convenience, we abuse notation and write $i^{ab}$ to mean $(i\pmod{2q})^{ab}$.
Similarly, all block indices are modulo $2q$.  
The graph $G_q$ has two types of edges, which we call matching edges and cross edges. 

\begin{enumerate}
    \item\label{eq:M}
    Matching edges inside a block $B_i$: 
  $i^{ab}i^{a,1-b}$.
  
Thus, in each block $B_i$, there are two matching edges $i^{00}i^{01}$ and $i^{10}i^{11}$.

\item\label{eq:S} Cross edges between consecutive blocks $B_i$ and $B_{i+1}$: $i^{ab}(i+1)^{bc}$.
  
Thus, $i^{ab}(i+1)^{cd}\in E(G_q)$ if and only if $b=c$.
\end{enumerate}

The $i$th \emph{connection} is the set of cross edges between $B_i$ and $B_{i+1}$.  
Every connection forms the disjoint union of two copies of $K_{2,2}$.
For fixed $a$ and $i$, the two vertices  $i^{a0}, i^{a1}$ (in $B_i$) have the same two neighbors in $B_{i-1}$, and their neighborhoods in $B_{i+1}$ are disjoint.

Every vertex of $G_q$ has one matching neighbor, two neighbors in the preceding block, and two neighbors in the following block.
Therefore $G_q$ is $5$-regular.
See \Cref{fig:firstgraphs}.
\begin{figure}[H]
\centering
\begin{tabular}{@{}c@{\hspace{1.5em}}c@{\hspace{1.5em}}c@{\hspace{1.5em}}c@{}}
\raisebox{-0.5\height}{%
  \resizebox{0.15\textwidth}{!}{\BlockPairDiagram}} &
\raisebox{-0.5\height}{%
  \resizebox{0.20\textwidth}{!}{\GraphRing{3}}} &
\raisebox{-0.5\height}{%
  \resizebox{0.25\textwidth}{!}{\GraphRing{4}}} &
\raisebox{-0.5\height}{%
  \resizebox{0.25\textwidth}{!}{\GraphRing{5}}}
\end{tabular}

\caption{The edges in consecutive blocks, and the graphs $G_3$, $G_4$, and $G_5$.}
\label{fig:firstgraphs}
\end{figure}

Let $\varepsilon=(\varepsilon_0,\varepsilon_1,\ldots,\varepsilon_{2q-1})\in\{0,1\}^{2q}$.
For bits $r,s\in\{0,1\}$,  $r\oplus s$ means $r+s\pmod 2$.
In particular, $r\oplus0=r$ and 
  $r\oplus1=1-r$.
Define
\[
  \rho_t(i^{ab})=(i+t)^{ab}
  \text{ and }
  \rho_\varepsilon(i^{ab})
    =i^{\,a\oplus\varepsilon_i,\,
    b\oplus\varepsilon_{i+1}}.
\]
All subscripts of $\varepsilon$ are read modulo $2q$.

In words, $\rho_t$ rotates the blocks by $t$ positions.
The map $\rho_\varepsilon$ is best viewed as an independent relabeling of the vertices in consecutive blocks.
Each $\varepsilon_i$ specifies whether the labels $0$ and $1$ are kept fixed or interchanged for vertices in $B_{i-1}$ and $B_i$.
A switch at index $j$ interchanges the two matching edges in $B_j$ and flips each matching edge in $B_{j-1}$.
Let us elaborate. 
Suppose that $\varepsilon_j=1$ and all other entries of $\varepsilon$ are $0$.  
In the block $B_j$, the first bit is changed, so $j^{0b}\leftrightarrow j^{1b}$.
Thus the two matching edges $j^{00}j^{01}$ and $j^{10}j^{11}$ are switched with each other.  
In the preceding block $B_{j-1}$, the second bit is changed, so $(j-1)^{a0}\leftrightarrow (j-1)^{a1}$.
Thus the two endpoints of each matching edge in $B_{j-1}$ are interchanged.
No other blocks are affected.

For a bijection $\phi:V(G_q)\to V(G_q)$, write $\phi(G_q)$ for the graph obtained by replacing every vertex $v$ by $\phi(v)$.
Thus, $E\bigl(\phi(G_q)\bigr)
  =\{\phi(u)\phi(v):uv\in E(G_q)\}$.

\begin{lemma}
\label{lem:relabelings}
For every $t\in\mathbb Z_{2q}$ and every $\varepsilon\in\{0,1\}^{2q}$,
$\rho_t(G_q)= G_q$ and $\rho_\varepsilon(G_q)= G_q$.
In other words, both $\rho_t$ and $\rho_\varepsilon$ preserve adjacency and non-adjacency.
\end{lemma}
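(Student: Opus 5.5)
Both $\rho_t$ and $\rho_\varepsilon$ are bijections of $V(G_q)$: $\rho_t$ shifts the block index by a fixed amount modulo $2q$, and $\rho_\varepsilon$ XORs each bit with a fixed bit. A bijection $\phi$ satisfies $\phi(G_q)=G_q$ as soon as it maps every edge to an edge. Indeed, $\phi$ then injects $E(G_q)$ into itself, and since $E(G_q)$ is finite this injection is onto. Onto means that non-edges go to non-edges. So for each map we only check that matching edges and cross edges are sent to edges.

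\textbf{Rotations.} The edge rules depend only on the bits and on the relation ``same block'' or ``next block''. The map $\rho_t$ preserves both relations modulo $2q$. So the matching edge $i^{ab}i^{a,1-b}$ is sent to $(i+t)^{ab}(i+t)^{a,1-b}$, and the cross edge $i^{ab}(i+1)^{bc}$ is sent to $(i+t)^{ab}(i+t+1)^{bc}$. Both images are edges of the same type.

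\textbf{Matching edges under $\rho_\varepsilon$.} The edge $i^{ab}i^{a,1-b}$ is sent to $i^{a\oplus\varepsilon_i,\,b\oplus\varepsilon_{i+1}}$ and $i^{a\oplus\varepsilon_i,\,(1-b)\oplus\varepsilon_{i+1}}$. The two first bits agree. The second bits are complementary, because $(1-b)\oplus s=1-(b\oplus s)$. So the image is a matching edge.

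\textbf{Cross edges under $\rho_\varepsilon$.} The edge $i^{ab}(i+1)^{bc}$ is sent to $i^{a\oplus\varepsilon_i,\,b\oplus\varepsilon_{i+1}}$ and $(i+1)^{b\oplus\varepsilon_{i+1},\,c\oplus\varepsilon_{i+2}}$. The second bit of the first endpoint and the first bit of the second endpoint are both $b\oplus\varepsilon_{i+1}$. This holds precisely because the definition uses the same $\varepsilon_{i+1}$ for the second bit in $B_i$ and the first bit in $B_{i+1}$. The edge rule $b=c$ is therefore preserved, and the image is a cross edge.

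There is one point to watch. The subscripts of $\varepsilon$ are read modulo $2q$, so the check includes the wrap-around connection between $B_{2q-1}$ and $B_0$. There the shared bit is $\varepsilon_{2q}=\varepsilon_0$, and this is the same entry used for the first bit in $B_0$, so consistency still holds. This bookkeeping is the only potential obstacle; otherwise the lemma is a direct verification.
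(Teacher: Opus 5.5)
Your proof is correct and follows essentially the same route as the paper: you check that matching edges and cross edges are sent to edges of the same type under $\rho_t$ and $\rho_\varepsilon$. The only cosmetic difference is how you get the reverse inclusion: you use a finiteness argument (an injective map from $E(G_q)$ into itself is onto), whereas the paper applies the same edge check to the inverse maps $\rho_{-t}$ and $\rho_\varepsilon^{-1}=\rho_\varepsilon$.
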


\begin{proof}
First consider $\rho_t$. 
It is a bijection with inverse $\rho_{-t}$.
A matching edge $i^{ab}i^{a,1-b}$ is sent to $(i+t)^{ab}(i+t)^{a,1-b}$, which is again a matching edge.  
A cross edge $i^{ab}(i+1)^{bc}$ is sent to $(i+t)^{ab}(i+t+1)^{bc}$, which is again a cross edge.  
Thus every edge of $\rho_t(G_q)$ is an edge of $G_q$.  
Applying the same argument to the inverse $\rho_{-t}$ gives the reverse inclusion, so $\rho_t(G_q)=G_q$.

Now consider $\rho_\varepsilon$. 
Since $(r\oplus\varepsilon_j)\oplus\varepsilon_j=r$ for every pair of bits $r,\varepsilon_j$, applying
$\rho_\varepsilon$ twice is the identity. 
Thus $\rho_\varepsilon$ is a bijection and is its own inverse.
A matching edge $i^{ab}i^{a,1-b}$ is sent to $i^{a\oplus\varepsilon_i, b\oplus\varepsilon_{i+1}}i^{a\oplus\varepsilon_i,(1-b)\oplus\varepsilon_{i+1}}$.
Since $(1-b)\oplus\varepsilon_{i+1}=1-(b\oplus\varepsilon_{i+1})$, it is a matching edge.
A cross edge $i^{ab}(i+1)^{bc}$ is sent to $i^{\,a\oplus\varepsilon_i,\,b\oplus\varepsilon_{i+1}}(i+1)^{\,b\oplus\varepsilon_{i+1},\,c\oplus\varepsilon_{i+2}}$, so it is a cross edge.
Thus every edge of $\rho_\varepsilon(G_q)$ is an edge of $G_q$.  
Since $\rho_\varepsilon$ is its own inverse, $\rho_\varepsilon(G_q)=G_q$.
\end{proof}

\begin{corollary}
\label{cor:normalization}
There are two types of edges and $q+2$ types of non-edges. 
In particular, 
\begin{enumerate}
\item Every matching edge is of type $0^{00}0^{01}$ and every cross edge is of type $0^{00}1^{00}$.
\item Every non-edge is of type $0^{00}0^{10}, 0^{00}0^{11},  0^{00}1^{10}$, or $0^{00}d^{00}$ for $d\in\{2, \ldots, q\}$.
\end{enumerate}
\end{corollary}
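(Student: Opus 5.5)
The plan is to use the automorphisms of \Cref{lem:relabelings} to move any edge or non-edge to a canonical representative. First I would note that $\rho_t$ and $\rho_\varepsilon$ preserve adjacency and non-adjacency, and so does any composition of them. Hence two pairs related by such a composition have the same type, and also the same ``reversed'' type once we use the unordered nature of pairs. The key tool is that, for any prescribed vertex $i^{ab}$, a composition of $\rho_{-i}$ with a suitable $\rho_\varepsilon$ sends it to $0^{00}$. Concretely, first rotate to get $0^{ab}$, then choose $\varepsilon_0=a$ and $\varepsilon_1=b$. We still have complete freedom in $\varepsilon_2,\dots,\varepsilon_{2q-1}$, and these do not move $0^{00}$.

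For edges, take a matching edge $i^{ab}i^{a,1-b}$ and send $i^{ab}$ to $0^{00}$. Since the map preserves matching edges and the matching partner of $0^{00}$ is $0^{01}$, the image is $0^{00}0^{01}$. For a cross edge $i^{ab}(i+1)^{bc}$, send $i^{ab}$ to $0^{00}$. The image of $(i+1)^{bc}$ is then a neighbor of $0^{00}$ in $B_1$, so it is $1^{00}$ or $1^{01}$. If it is $1^{01}$, apply $\rho_\varepsilon$ with only $\varepsilon_2=1$. This fixes $B_0$ and sends $1^{01}$ to $1^{00}$.

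For non-edges $uv$, I would split by the cyclic block distance $d\in\{0,\dots,q\}$ between the blocks of $u$ and $v$. After possibly swapping $u$ and $v$ and rotating, we may take $u$ in $B_0$ and $v$ in $B_d$ with $0\le d\le q$; reflection is not needed because we only orient the pair. Then normalize $u=0^{00}$.
\begin{itemize}
\item If $d=0$, then $v\ne 0^{01}$, so $v\in\{0^{10},0^{11}\}$. These two are genuinely different, since a $\rho_\varepsilon$ fixing $0^{00}$ must have $\varepsilon_0=\varepsilon_1=0$ and so fixes $B_0$ pointwise.
\item If $d=1$, non-adjacency forces $v=1^{1c}$. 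Choosing $\varepsilon_2=c$ sends it to $1^{10}$.
\item If $2\le d\le q$, the free bits $\varepsilon_d$ and $\varepsilon_{d+1}$ can be set to reach $d^{00}$. This uses $d\le q<2q-1$, so that indices $d$ and $d+1$ avoid $0$ and $1$ modulo $2q$.
\end{itemize}
The case $d=2q-1$ at index wrap-around is excluded by orienting the pair so that $d\le q$.

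Counting the types gives $3+1+(q-1)=q+2$ types of non-edges, together with 2 types of edges. The main subtle point is the boundary case $d=q$ together with the independence of the needed $\varepsilon$ indices from the fixed ones. It needs $q\ge 3$, or at least $2q\ge 4$, so that $\{d,d+1\}\cap\{0,1\}=\emptyset$, and this is what I would check most carefully. The corollary's claim that these types are distinct is not strictly needed. If it were, one could distinguish them by block distance and, for $d=0$, by the pointwise-fixing argument above.
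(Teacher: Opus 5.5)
Your proof is correct and takes essentially the paper's route. You rotate so one endpoint lies in $B_0$ and the other at cyclic block distance $d\in\{0,\dots,q\}$, use $\varepsilon_0,\varepsilon_1$ to send the first endpoint to $0^{00}$, and then use the free entries ($\varepsilon_2$ for $d=1$, or $\varepsilon_d,\varepsilon_{d+1}$ for $2\le d\le q$) to normalize the second endpoint. One small slip: the non-edge count should read $2+1+(q-1)=q+2$ (two types in $B_0$, one in $B_1$, and $q-1$ at distance at least $2$), not $3+1+(q-1)$.
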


\begin{proof}
Let $u=i^{ab}$ and $v=j^{cd}$ be distinct vertices, and let  $d_0=
\min\{|k|:k\in\mathbb Z,\ i+k\equiv j\pmod{2q}\}$.
In other words, $d_0$ is the cyclic block distance. 
By interchanging $u$ and $v$, if necessary, we may assume $j\equiv i+d_0\pmod{2q}$.
Applying the rotation $\rho_{-i}$, we may further assume that
$i=0$ and $j=d_0$.
We now choose the entries of $\varepsilon$.

\textbf{Case 1}: $d_0=0$, so $u$ and $v$ lie in the same block.

Choose $(\varepsilon_0, \varepsilon_1)=(a,b)$.
Then $\rho_\varepsilon(u)=\rho_\varepsilon(0^{ab})=0^{a\oplus a, b\oplus b}=0^{00}$ and $\rho_\varepsilon(v)=\rho_\varepsilon(d_0^{cd})=0^{\,c\oplus a,\,d\oplus b}$.

Because $u\ne v$, we know $(c\oplus a, d\oplus b)\neq(0,0)$.
If $(c\oplus a, d\oplus b)=(0, 1)$, then  $uv$ is the matching edge $0^{00}0^{01}$.
Otherwise, $c\oplus a=1$, so $uv$ is one of the two possible non-edges in $B_0$. 

\textbf{Case 2}: $d_0=1$, so $u$ and $v$ lie in consecutive blocks.

Choose $(\varepsilon_0,  \varepsilon_1, \varepsilon_2)=(a,b,d)$.
Then $\rho_\varepsilon(u)=\rho_\varepsilon(0^{ab})=0^{a\oplus a, b\oplus b}=0^{00}$ and $\rho_\varepsilon(v)=\rho_\varepsilon(d_0^{cd})=1^{\,c\oplus b,\,d\oplus d}=1^{c\oplus b, 0}$.
If $b=c$, then $uv$ is the cross edge $0^{00}1^{00}$.
Otherwise, $uv$ is a non-edge $0^{00}1^{10}$ between $B_0$ and $B_1$. 

\textbf{Case 3}: $d_0\in\{2, \ldots,  q\}$. 

The two bits of $u$ and $v$ are controlled by $\varepsilon_0,\varepsilon_1$, and $\varepsilon_{d_0},\varepsilon_{d_0+1}$, respectively. 
Because $q\ge3$, we may choose the four entries independently so that both bits become $00$.  
Then $uv$ becomes $0^{00}(d_0)^{00}$ and since the blocks are neither equal nor consecutive, this pair is a non-edge.
\end{proof}

\section{The graph $G_q$ has no induced $C_{2q+2}$}
\label{sec:nocycle}

Recall that the $i$th \emph{connection} is the set of cross edges between $B_i$ and $B_{i+1}$.  
Let $Q$ be an induced cycle in $G_q$.  
Define
\begin{itemize}
\item $b_i=|V(Q)\cap B_i|$, the number of vertices of $Q$ in block $B_i$;
\item $m_i$: the number of matching edges of $Q$ within $B_i$;
\item $c_i$: the number of cross edges of $Q$ in the $i$th connection.
\end{itemize}

Because every edge of $Q$ is either a matching edge or a cross edge, we have 
\begin{equation}
  |E(Q)|=\sum_i c_i+\sum_i m_i
  \label{eq:lengthcount}
\end{equation}

By adding the vertex degrees of vertices in $V(Q)\cap B_i$, we obtain 
\begin{equation}
  2b_i=c_{i-1}+c_i+2m_i
  \label{eq:degreecount}
\end{equation}
Reducing modulo $2$ gives $c_{i-1}\equiv c_i\pmod2$.
Hence all $c_i$ have the same parity.  

Given a block $B_i$ and a bit $b\in\{0,1\}$, let
$L_i^b=\{i^{ab}\in V(Q):a\in\{0,1\}\}$ and $R_i^b=\{(i+1)^{bc}\in V(Q):c\in\{0,1\}\}$.
Also, $\ell_i^b=|L_i^b|$ and $r_i^b=|R_i^b|$. 
Since $Q$ is induced, every vertex in $L_i^b$ is adjacent to every vertex in $R_i^b$. 
Thus
\begin{equation}
  c_i=\ell_i^0r_i^0+\ell_i^1r_i^1.
  \label{eq:cutproduct}
\end{equation}

The following lemma shows that four edges in a connection force $Q$ to be a 6-cycle. 

\begin{lemma}\label{lem:fourcut}
For every $i$, we have $c_i\le4$. 
In particular, if $c_i=4$ for some $i$, then $Q$ is an induced $6$-cycle whose vertex set is in $B_i\cup B_{i+1}$.
\end{lemma}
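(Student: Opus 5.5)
The plan is to use the product formula \eqref{eq:cutproduct} together with the fact that $Q$ is an induced cycle, so every vertex of $Q$ has exactly two neighbours in $Q$ and $Q$ contains no induced $K_{1,3}$ or $C_4$ unless $Q$ itself is that $C_4$.

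\textbf{Bounding each bipartite piece.} Fix $i$ and a bit $b$. The sets $L_i^b$ and $R_i^b$ have at most two vertices each. Since $Q$ is induced, $L_i^b\cup R_i^b$ spans a complete bipartite graph inside $Q$. Suppose $\ell_i^b=r_i^b=2$. Then $Q$ contains a $C_4$ on these four vertices. The two vertices of $L_i^b$ are $i^{0b},i^{1b}$, which are non-adjacent, and the two vertices of $R_i^b$ are $(i+1)^{b0},(i+1)^{b1}$, which are adjacent by a matching edge. So these four vertices induce $K_4$ minus an edge, which contains a triangle. An induced cycle of length at least $4$ cannot contain a triangle, and a triangle itself is not the situation here. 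Hence $\ell_i^b r_i^b\le 2$ for each $b$, and $c_i\le 4$.

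I should double-check the case $\ell=1$, $r=2$: this gives a path whose endpoints are matched, hence a triangle. So even $\ell_i^b r_i^b=2$ with $r_i^b=2$ is impossible. With $\ell_i^b=2$, $r_i^b=1$ it is a $K_{1,2}$, which is fine. Hence each product is at most $2$, and achieving $2$ requires $\ell_i^b=2$, $r_i^b=1$.

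\textbf{The case $c_i=4$.} Then both $b=0$ and $b=1$ have $\ell_i^b=2$ and $r_i^b=1$. So all four vertices of $B_i$ lie in $Q$, together with one vertex $(i+1)^{0c}$ and one vertex $(i+1)^{1c'}$. Each vertex of $B_i$ already has degree $2$ in $Q$: one cross neighbour in $B_{i+1}$ and one matching partner. The vertex $(i+1)^{0c}$ has two neighbours $i^{00},i^{10}$, so it also has degree $2$, and similarly for $(i+1)^{1c'}$.

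So these six vertices form a closed $2$-regular subgraph. I would check it is connected by tracing it:
\[
i^{00}\to(i+1)^{0c}\to i^{10}\to i^{11}\to(i+1)^{1c'}\to i^{01}\to i^{00}.
\]
This is a $6$-cycle, and since $Q$ is a cycle, $Q$ equals it. Its vertex set lies in $B_i\cup B_{i+1}$. Inducedness also holds because $(i+1)^{0c}$ and $(i+1)^{1c'}$ are in different matching pairs, so they are non-adjacent.

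The main subtlety is the triangle exclusion, i.e.\ ruling out $r_i^b=2$, which relies on the matching edge inside $R_i^b$.
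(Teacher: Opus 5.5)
Your proof follows the same route as the paper's: bound each $\ell_i^b r_i^b$ by $2$, then in the case $c_i=4$ force $(\ell_i^b,r_i^b)=(2,1)$ for both bits and read off the $6$-cycle. The first and last parts are fine.

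The one flawed step is the exclusion of $(\ell_i^b,r_i^b)=(1,2)$. You assert this is impossible outright, and conclude that a product equal to $2$ always requires $\ell_i^b=2$, $r_i^b=1$. That is false in general. $Q$ is an arbitrary induced cycle, and it may itself be the triangle $i^{ab}(i+1)^{b0}(i+1)^{b1}$, which has $\ell_i^b=1$, $r_i^b=2$ and $c_i=2$. The later lemmas of the paper explicitly treat such $3$-cycles. In your first paragraph you ruled out $Q$ being a triangle because four vertices were present. In the $(1,2)$ case you never rule it out, so your closing remark that the $r_i^b=2$ exclusion relies only on the matching edge inside $R_i^b$ is not right.

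The repair is one line, and it is exactly the paper's step. When $c_i=4$, the other bit also satisfies $\ell_i^{1-b}r_i^{1-b}=2$. So $Q$ has vertices, and an edge, outside the triangle on $L_i^b\cup R_i^b$. A cycle that contains a triangle must equal that triangle, so this is a contradiction. Insert this justification and restrict the claim to the case $c_i=4$; the rest of your argument then goes through unchanged.
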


\begin{proof}
If $\ell_i^b=r_i^b=2$, then $R_i^b$ has two vertices $(i+1)^{b0}$ and $(i+1)^{b1}$, which are endpoints of a matching edge.  
Moreover, each vertex in $R_i^b$ is also adjacent to both  vertices in $L_i^b$, so it has at least three neighbors, which contradicts that $Q$ is a cycle.
Therefore $\ell_i^br_i^b\le2$ for each $b$, so \eqref{eq:cutproduct} implies $c_i\le4$.

Suppose $c_i=4$.
Then $\ell_i^br_i^b=2$ for each $b\in\{0,1\}$. 
Assume  $(\ell_i^b, r_i^b)=(1,2)$ for some $b\in\{0,1\}$.
Then $Q$ has a $3$-cycle. 
Since $\ell_i^{1-b}\neq0\neq r_i^{1-b}$, there is an additional edge of $Q$, which contradicts that $Q$ is a cycle.  

Otherwise, $(\ell_i^0, r_i^0, \ell_i^1, r_i^1)=(2,1,2,1)$.
Thus the two vertices of $Q$ in $B_{i+1}$ are $(i+1)^{0c}$ and $(i+1)^{1d}$ for some bits $c, d$, so $Q$ is the $6$-cycle $i^{00}i^{01}(i+1)^{1d}i^{11}i^{10}(i+1)^{0c}$.
\end{proof}

\begin{lemma}
\label{lem:internalblock}
If $c_{i-1}=c_i=2$, then $m_i=0$.
\end{lemma}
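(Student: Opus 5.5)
The plan is to suppose, for a contradiction, that $c_{i-1}=c_i=2$ and $m_i\ge1$. We then count degrees in $B_i$ and use the fact that the two endpoints of a matching edge in $B_i$ have identical neighbourhoods in $B_{i-1}$. Throughout we use that $Q$ is an induced cycle of length at least $4$, so $Q$ contains no triangle. By \eqref{eq:degreecount} we get $2b_i=2+2+2m_i$, that is, $b_i=2+m_i$. Since $B_i$ has only two matching edges, $m_i\in\{1,2\}$, and we treat the two values separately.

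\textbf{Case $m_i=2$.} Then $b_i=4$, so all of $B_i$ lies in $V(Q)$. In particular $r_{i-1}^0=r_{i-1}^1=2$, and \eqref{eq:cutproduct} gives $c_{i-1}=2(\ell_{i-1}^0+\ell_{i-1}^1)$. Hence $c_{i-1}=2$ forces exactly one vertex $w\in B_{i-1}\cap V(Q)$. This vertex $w$ is adjacent to both endpoints of a matching edge $i^{b0}i^{b1}$ of $Q$, which produces a triangle in $Q$. That is a contradiction.

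\textbf{Case $m_i=1$.} Then $b_i=3$, so $V(Q)\cap B_i=\{i^{a0},i^{a1},v\}$, where $i^{a0}i^{a1}$ is the matching edge of $Q$ and $v=i^{a'b'}$ with $a'\ne a$.
\begin{itemize}
\item The vertices $i^{a0}$ and $i^{a1}$ have the same neighbours $(i-1)^{xa}$ in $B_{i-1}$. So if $\ell_{i-1}^a\ge1$, we again obtain a triangle. Hence $\ell_{i-1}^a=0$, and neither endpoint of the matching edge has a $Q$-neighbour in $B_{i-1}$.
\item Each of $i^{a0}$ and $i^{a1}$ has degree $2$ in $Q$. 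One of these edges is the matching edge, so each has exactly one $Q$-neighbour in $B_{i+1}$. These two cross edges already contribute $2$ to $c_i$.
\item The vertex $i^{ab'}$ lies in $L_i^{b'}$ together with $v$, so $r_i^{b'}\ge1$ and $\ell_i^{b'}\ge2$. By \eqref{eq:cutproduct}, $c_i\ge \ell_i^{b'}r_i^{b'}+\ell_i^{1-b'}r_i^{1-b'}\ge 2+1=3$.
\end{itemize}
This contradicts $c_i=2$, and the lemma follows.

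The main obstacle is the bookkeeping in the case $m_i=1$. There we must first exclude neighbours in $B_{i-1}$ of the matched pair by the triangle argument, and only then observe that the unmatched vertex $v$ shares a class $L_i^{b'}$ with one endpoint of the matching edge. It is this shared class that pushes $c_i$ up to $3$.
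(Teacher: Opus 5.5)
Your proof is correct, but it reaches the contradiction by a different route from the paper's.

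\textbf{The paper's route.} It does not split on $m_i$. Starting from a matched pair $i^{b0}i^{b1}$ of $Q$, it uses triangle-freeness on the $(i-1)$st connection twice. First it shows the matched pair has no $Q$-neighbour in $B_{i-1}$. Then it shows that no vertex of $B_{i-1}$ is joined in $Q$ to both $i^{1-b,0}$ and $i^{1-b,1}$. With $c_{i-1}=2$, this forces both edges of the $(i-1)$st connection to meet at a single vertex $i^{1-b,c}$. That vertex then already has degree $2$, so $(i+1)^{c0},(i+1)^{c1}\notin V(Q)$. The matched vertex $i^{bc}$ has the same $B_{i+1}$-neighbourhood, so it is left with degree $1$ in $Q$.

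\textbf{Your route.} You use \eqref{eq:degreecount} to pin down $b_i=2+m_i$. You dispose of $m_i=2$ by applying \eqref{eq:cutproduct} to the $(i-1)$st connection. For $m_i=1$ you need triangle-freeness only for the matched pair, and the contradiction appears as $c_i\ge 3$ via \eqref{eq:cutproduct} on the $i$th connection.

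\textbf{Comparison.} Both arguments rest on the same fact: a vertex of the matched pair and the remaining vertex of $V(Q)\cap B_i$ with the same second bit share their $B_{i+1}$-neighbourhood. Yours is more mechanical, and it never has to work out how the two edges of the $(i-1)$st connection are arranged. The paper's is purely local and avoids both the case split and the counting identities.

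One small point: you take as a standing assumption that $Q$ has length at least $4$, but this is not automatic for an arbitrary induced cycle $Q$. It follows at once from $|E(Q)|\ge c_{i-1}+c_i=4$, which is exactly what the paper notes at the start of its proof.
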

\begin{proof}
Since $c_{i-1}+c_i=4$, $Q$ cannot be a $3$-cycle. 
Suppose to the contrary that $m_i\geq 1$.
Let $i^{b0}i^{b1}$ be an edge of $Q$ for some bit $b$.
If $(i-1)^{ab}\in V(Q)$ for some bit $a$, then $Q$ must be a $3$-cycle $(i-1)^{ab}i^{b0}i^{b1}$, which is a contradiction. 
In particular, both $i^{b,0}$ and $i^{b,1}$ are incident with no edges of $Q$ in the $(i-1)$st connection. 

Similarly, if both edges incident with $(i-1)^{a,1-b}$ are in $Q$ for some bit $a$, then $Q$ must be a $3$-cycle $(i-1)^{a,1-b}i^{1-b,0}i^{1-b,1}$, which is a contradiction. 
Note that $i^{1-b,0}i^{1-b,1}$ is a matching edge of $B_i$. 
Since $c_{i-1}=2$, each of $(i-1)^{0,1-b}$ and $(i-1)^{1,1-b}$ is incident with exactly one edge of $Q$ in the $(i-1)$st connection. 
In particular, there is a vertex $i^{1-b,c}$ for some bit $c$ that is incident with no edges of $Q$ in the $i$th connection since it is already incident with two edges of $Q$ in the $(i-1)$st connection. 
This further implies $(i+1)^{c,0}$ and $(i+1)^{c,1}$ are not in $Q$. 
Furthermore, $i^{b,c}$ is incident with no edges of $Q$ in the $i$th connection. 
Hence, either $i^{b,0}$ or $i^{b,1}$ is incident with only one edge in $Q$, which contradicts that $Q$ is a cycle.
\end{proof}

\begin{lemma}\label{lem:properinterval}
If $c_j=0$, then $Q$ is either a $6$-cycle from \Cref{lem:fourcut} or an odd cycle.
\end{lemma}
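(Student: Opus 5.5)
Since $c_j=0$ and all $c_i$ have the same parity by \eqref{eq:degreecount}, every $c_i$ is even. If some $c_i=4$, \Cref{lem:fourcut} already says $Q$ is the induced $6$-cycle inside $B_i\cup B_{i+1}$. So I will assume every $c_i\in\{0,2\}$ and aim to show that $Q$ is a triangle or has odd length.

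\textbf{Reducing to an interval of blocks.} Each block induces only a matching, so $Q$ cannot lie in a single block. Because $Q$ is connected and avoids the $j$th connection, its blocks form a cyclic interval $B_s,\dots,B_t$ with $s\ne t$. Here $c_{s-1}=c_t=0$, and every $c_i$ with $s\le i<t$ is nonzero, hence equal to $2$. For the interior blocks $s<i<t$, \Cref{lem:internalblock} gives $m_i=0$. By \eqref{eq:lengthcount}, the length of $Q$ is therefore $2(t-s)+m_s+m_t$. It remains to pin down the two end blocks. The key asymmetry is that $i^{ab}$ is joined to $(i+1)^{bc}$: vertices of $B_i$ are grouped by their second bit and vertices of $B_{i+1}$ by their first bit.

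\textbf{Left end $B_s$.} Here $c_{s-1}=0$ and $c_s=2$. By \eqref{eq:cutproduct} the two cross edges come from $(\ell_s^b,r_s^b)=(1,2)$, from $(2,1)$, or from $\ell_s^0r_s^0=\ell_s^1r_s^1=1$.
\begin{itemize}
\item In case $(1,2)$ the right vertices are $(s+1)^{b0},(s+1)^{b1}$. They are matched to each other, so $Q$ is a triangle.
\item In case $(2,1)$ the vertices $s^{0b},s^{1b}$ each need a second neighbour. The only candidates are their matching partners, and those partners have no further possible neighbours, which is a contradiction.
\item In the split case the two vertices of $Q$ in $B_s$ that carry cross edges are $s^{a0}$ and $s^{a'1}$. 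Their other neighbours lie inside $B_s$, so they must be joined by a matching edge. This forces $a=a'$, $b_s=2$ and $m_s=1$.
\end{itemize}

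\textbf{Right end $B_t$.} Here $c_{t-1}=2$ and $c_t=0$, and I will run the symmetric analysis.
\begin{itemize}
\item If $r_{t-1}^b=2$, the two vertices $t^{b0},t^{b1}$ are matched, so $Q$ is again a triangle.
\item If $\ell_{t-1}^b=2$, then $B_t$ contains the single vertex $t^{bc}$, which already has degree $2$. Hence $m_t=0$.
\item In the split case the two vertices of $B_t$ are $t^{0c}$ and $t^{1d}$, with different first bits. They cannot be matched, and they have no other available neighbours, which is a contradiction.
\end{itemize}

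Hence, if $Q$ is not a triangle, then $m_s=1$ and $m_t=0$, so $|E(Q)|=2(t-s)+1$ is odd. The main obstacle is this end-block case analysis. It must be carried out carefully so as to use the asymmetry of the cross edges, since that asymmetry is exactly what makes $m_s$ and $m_t$ differ.
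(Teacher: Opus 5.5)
Your proof is correct and follows essentially the same route as the paper: parity plus \Cref{lem:fourcut} reduce to $c_i\in\{0,2\}$, the blocks of $Q$ form an interval, \Cref{lem:internalblock} handles the interior, and the end-block analysis gives either a triangle or $m_s=1$, $m_t=0$. The only difference is bookkeeping at the ends: you case on the $(\ell,r)$ patterns in \eqref{eq:cutproduct}, while the paper cases on $m_i$ via $b_i=m_i+1$ from \eqref{eq:degreecount}; that identity also gives directly the step you leave implicit in the case $\ell_{t-1}^b=2$, namely that $t^{bc}$ is the only vertex of $Q$ in $B_t$ (any vertex of $R_{t-1}^{1-b}$ would force $\ell_{t-1}^{1-b}=0$ and so have degree at most $1$ in $Q$).
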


\begin{proof}
Since all $c_i$ have the same parity and $c_j=0$, we know  every $c_i$ is even.  
If a positive $c_i$ equals $4$, \Cref{lem:fourcut} gives the $C_6$.  
Therefore, every positive $c_i$ equals $2$.

Since $Q$ is connected, the blocks containing vertices of  $Q$ form one interval.  
Relabel them as $B_0,B_1,\ldots,B_t$, where the $t$ connections between consecutive blocks are each used twice.  
Thus $c_{-1}=c_t=0$ and $c_0=c_1=\cdots=c_{t-1}=2$.

At $B_0$, \eqref{eq:degreecount} implies $b_0=m_0+1$.
If $m_0=2$, then $c_0\geq 4$, which is a contradiction. 
If $m_0=0$, then $B_0$ contains one vertex $0^{ab}$ of $Q$. 
Since $c_0=2$, $0^{ab}1^{b0}$ and $0^{ab}1^{b1}$ are the two edges of $Q$ in the $0$th connection. 
Moreover, $1^{b0}1^{b1}$ is a matching edge, so $Q$ is a $3$-cycle, which has odd length. 
Otherwise, $m_0=1$.

For every $i\in\{1, \ldots, t-1\}$, \Cref{lem:internalblock} implies $m_i=0$.

At $B_t$, \eqref{eq:degreecount} implies $b_t=m_t+1$.
If $m_t=2$, then $c_{t-1}\geq 4$, which is a contradiction. 
If $m_t=1$, then the vertices of $Q$ in $B_t$ form a matching pair $t^{a0}t^{a1}$ for some bit $a$.  
Since $c_{t-1}=2$, $(t-1)^{ba}$ is in $Q$ for some bit $b$. 
Now, $(t-1)^{ba}t^{a0}t^{a1}$ is a $3$-cycle, so $Q$ is a $3$-cycle, which has odd length. 

Otherwise, $m_t=0$. 
Now, by \eqref{eq:lengthcount}, $|E(Q)|=2t+1$, which is odd.
\end{proof}

\begin{proposition}\label{prop:free}
The graph $G_q$ contains no induced $C_{2q+2}$.
\end{proposition}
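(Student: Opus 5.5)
The plan is to let $Q$ be an induced cycle of $G_q$ of length $2q+2\ge 8$ and derive a contradiction from the counting identities \eqref{eq:lengthcount}, \eqref{eq:degreecount}, \eqref{eq:cutproduct} together with the earlier lemmas. Since $|E(Q)|\ge 8$, $Q$ is neither the $6$-cycle of \Cref{lem:fourcut} nor odd. So \Cref{lem:properinterval} gives $c_i\ge1$ for every $i$, and \Cref{lem:fourcut} gives $c_i\le 3$ for every $i$. Because all $c_i$ have the same parity, either every $c_i=2$, or every $c_i\in\{1,3\}$.

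\textbf{Even case.} Here every $c_i=2$, so \Cref{lem:internalblock} applies at every block and gives $m_i=0$ for all $i$. Then \eqref{eq:lengthcount} gives $|E(Q)|=\sum_i c_i=4q$. This differs from $2q+2$ because $q\ge3$, a contradiction.

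\textbf{Odd case.} Here $\sum_i c_i\ge 2q$, so \eqref{eq:lengthcount} leaves only two possibilities for length $2q+2$:
\begin{itemize}
\item[(a)] exactly one $c_j=3$, all other $c_i=1$, and $\sum_i m_i=0$;
\item[(b)] all $c_i=1$ and $\sum_i m_i=2$.
\end{itemize}
Throughout I use one basic observation. If a vertex of $Q$ is adjacent to one endpoint of a matching pair $\{k^{x0},k^{x1}\}$ from the preceding block, it is adjacent to both endpoints. Hence if both endpoints lie in $Q$, we get a triangle, or a chord, in the cycle $Q$ of length at least $8$.

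\emph{Case (a).} Write $3=\ell_j^0r_j^0+\ell_j^1r_j^1$ via \eqref{eq:cutproduct}, so some $\ell_j^br_j^b=2$. The split $(\ell_j^b,r_j^b)=(1,2)$ puts a matching pair of $B_{j+1}$ into the neighborhood of one vertex, giving a triangle. So $(\ell_j^b,r_j^b)=(2,1)$ and $\ell_j^{1-b}=1$. Then $B_j\cap V(Q)$ contains $j^{0b}$ and $j^{1b}$, and also some $j^{x,1-b}$. Its matching partner $j^{x,b}$ is also in $Q$. Since $Q$ is induced, the edge $j^{x,b}j^{x,1-b}$ is an edge of $Q$, so $m_j\ge1$, contradicting $\sum_i m_i=0$.

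\emph{Case (b).} Take $i$ with $m_i\ge1$. By \eqref{eq:degreecount}, $b_i=1+m_i$. If $m_i=2$, all four vertices of $B_i$ lie in $Q$, and the unique vertex of $Q$ in $B_{i-1}$ meeting the $(i-1)$st connection sees a whole matching pair, giving a triangle. If $m_i=1$, then $B_i\cap V(Q)$ is exactly a matching pair $\{i^{a0},i^{a1}\}$. The single cross edge of the $(i-1)$st connection must end in this pair, so its other endpoint is adjacent to both $i^{a0}$ and $i^{a1}$. This again gives a triangle. Thus $m_i=0$ for all $i$, so $|E(Q)|=2q$, a contradiction.

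I expect the main obstacle to be the odd case: checking carefully that the $c_j=3$ configuration forces a matching edge into $Q$, and that the observation about matching pairs applies in each subcase. I would verify these small configurations directly from the adjacency rule $i^{ab}\sim(i+1)^{bc}$.
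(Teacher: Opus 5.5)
Your proof is correct and follows essentially the same route as the paper: use \Cref{lem:properinterval} and \Cref{lem:fourcut} to get $1\le c_i\le 3$, eliminate the even-parity case by counting, and split the odd case into exactly one $c_j=3$ with all $m_i=0$, versus all $c_i=1$ with $\sum_i m_i=2$. The differences are only local. In the even case you invoke \Cref{lem:internalblock}, although $\sum_i c_i\ge 4q>2q+2$ already suffices. In the $c_j=3$ case you use \eqref{eq:cutproduct} to force a matching edge inside $B_j$, whereas the paper uses \eqref{eq:degreecount} to get $b_j=b_{j+1}=2$ and then shows $c_j\le 2$.
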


\begin{proof}
Suppose to the contrary that $Q$ is an induced $C_{2q+2}$. 
Since $2q+2\ge8$, $Q$ is an even cycle with length at least $8$. 
By \Cref{lem:properinterval}, $Q$ must have an edge in every connection.

Recall that all $c_i$ have the same parity. 
Note that there are $2q$ connections and
$2q+2=|E(Q)|= \sum_i c_i+\sum_i m_i$.
If the parity is even, then  $c_i\ge2$ for every $i$, so $\sum_i c_i\ge4q>2q+2$, which is a contradiction.  
Hence every $c_i$ is odd.

By \Cref{lem:fourcut}, $c_i\in\{1,3\}$ for every $i$. 
If there are two connections that contain three edges of $Q$, then $\sum_i c_i\geq 2q+4$, which is a contradiction. 
Hence, there is at most one connection containing three edges of $Q$. 

Assume $c_i=1$ for every $i$, so $\sum_i m_i=2$.
For a block $B_i$ with $m_i>0$, \eqref{eq:degreecount} says
$b_i=m_i+1$. 
If $m_i=2$ for some $i$, then $b_i=4$, which violates $b_i=m_i+1$. 
Thus, there are two blocks $B_i$ and $B_j$ with $m_i=1=m_j$.
Moreover, $B_i$ contains exactly one matching pair, which has the same neighborhood in $B_{i-1}$.  This contradicts $c_{i-1}=1$.

Assume $c_j=3$ for some $j$ and $c_i=1$ for all $i\neq j$, so $m_i=0$ for every $i$.
By~\eqref{eq:degreecount}, each of $B_j$ and $B_{j+1}$ contains exactly two vertices of $Q$. 
Since $m_{j+1}=0$, the two vertices of $Q$ in $B_{j+1}$ are $(j+1)^{0a}$ and $(j+1)^{1b}$ for some bits $a,b$. 
Each vertex of $B_j$ is adjacent to exactly one of $(j+1)^{0a}$ and $(j+1)^{1b}$, so $c_j\leq 2$, which is a contradiction. 
\end{proof}

\section{Adding a non-edge}
\label{sec:addedge}

For $\beta\in\{00,01,10,11\}$ and integers $i,j$, define
\[
[i;j]_{+}^{\beta}
=
\begin{cases}
i^\beta,(i+1)^\beta,\ldots,j^\beta,& i\le j,\\
\emptyset,& i>j,
\end{cases}
\qquad\mbox{ and }\qquad
[i;j]_{-}^{\beta}
=
\begin{cases}
i^\beta,(i-1)^\beta,\ldots,j^\beta,& i\ge j,\\
\emptyset,& i<j.
\end{cases}
\]

We will prove that for every non-edge $uv$, there is an induced path of length $2q+1$ from $u$ to $v$. 
By \Cref{cor:normalization}, without loss of generality, a non-edge $uv$ is one of the following forms: 
\[
  0^{00}0^{10},\qquad
  0^{00}0^{11},\qquad
  0^{00}1^{10},\qquad
  0^{00}d^{00}\quad(2\le d\le q).
\]

Note that for $r\le s$, the graph induced on
$[r;s]_{+}^{00}\cup[r;s]_{+}^{11}$
is the disjoint union of two induced paths along consecutive blocks.

Define the following sequence of vertices: 
\begin{align*}
\sigma_{0^{10}}
  &:
  0^{00},[2q-1;2]_{-}^{00},1^{10},0^{11},0^{10} \\
\sigma_{0^{11}}
  &:
  0^{00},[2q-1;q]_{-}^{00},q^{01},
  [q+1;2q-1]_{+}^{11},0^{11} \\
\sigma_{1^{10}}
  &:
  0^{00},1^{01},2^{10},[3;2q-2]_{+}^{00},
  (2q-1)^{01},0^{11},1^{10}
\end{align*}

\tikzset{
  blocklabel/.style={
    font=\scriptsize,
    anchor=base,
    text height=1.5ex,
    text depth=0.25ex
  }
}

\begin{figure}[H]
\centering

\begin{tikzpicture}[
  x=0.66cm,
  y=0.58cm,
  line cap=round,
  line join=round
]

\begin{scope}[xshift=0cm]

  \foreach \name/\x in {
    0/0,
    1/1.2,
    2/2.4,
    2q-1/4.8
  }{
    \foreach \y in {0,1,2,3}{
      \fill (\x,\y) circle[radius=0.03];
    }

    \draw[matchcolor!50,line width=0.8pt]
      (\x,0)--(\x,1);

    \draw[matchcolor!50,line width=0.8pt]
      (\x,2)--(\x,3);

    \node[blocklabel] at (\x,3.55) {$B_{\name}$};
  }

  \node[font=\Large] at (3.6,1.5) {$\cdots$};

  \draw[red!80!black,line width=1.15pt]
    (0,0) to[out=-35,in=-145] (4.8,0);

  \draw[
    red!80!black,
    densely dotted,
    line width=1.15pt
  ]
    (4.55,0)--(2.65,0);

  \draw[red!80!black,line width=1.15pt]
    (2.4,0)--(1.2,2)--(0,3)--(0,2);

  \foreach \x/\y in {
    0/0,
    4.8/0,
    2.4/0,
    1.2/2,
    0/3,
    0/2
  }{
    \fill[red!80!black]
      (\x,\y) circle[radius=0.05];
  }

  \node[
    font=\scriptsize,
    anchor=north,
    yshift=-2pt
  ] at (0,0) {$0^{00}$};

  \node[
    font=\scriptsize,
    anchor=north,
    yshift=-2pt
  ] at (4.8,0) {$(2q-1)^{00}$};

  \node[
    font=\scriptsize,
    anchor=north,
    yshift=-2pt
  ] at (2.4,0) {$2^{00}$};

  \node[
    font=\scriptsize,
    anchor=west,
    xshift=-2pt
  ] at (1.2,2) {$1^{10}$};

  \node[
    font=\scriptsize,
    anchor=east,
    xshift=2pt
  ] at (0,3) {$0^{11}$};

  \node[
    font=\scriptsize,
    anchor=east,
    xshift=2pt
  ] at (0,2) {$0^{10}$};

\end{scope}

\begin{scope}[xshift=5.1cm]

  \foreach \name/\x in {
    q/0,
    q+1/1.2,
    2q-1/3.6,
    0/4.8
  }{
    \foreach \y in {0,1,2,3}{
      \fill (\x,\y) circle[radius=0.03];
    }

    \draw[matchcolor!50,line width=0.8pt]
      (\x,0)--(\x,1);

    \draw[matchcolor!50,line width=0.8pt]
      (\x,2)--(\x,3);

    \node[blocklabel] at (\x,3.55) {$B_{\name}$};
  }

  \node[font=\Large] at (2.4,1.5) {$\cdots$};

  \draw[red!80!black,line width=1.15pt]
    (4.8,0)--(3.6,0);

  \draw[
    red!80!black,
    densely dotted,
    line width=1.15pt
  ]
    (3.35,0)--(1.45,0);

  \draw[red!80!black,line width=1.15pt]
    (1.2,0)--(0,0)--(0,1)--(1.2,3);

  \draw[
    red!80!black,
    densely dotted,
    line width=1.15pt
  ]
    (1.45,3)--(3.35,3);

  \draw[red!80!black,line width=1.15pt]
    (3.6,3)--(4.8,3);

  \foreach \x/\y in {
    4.8/0,
    3.6/0,
    0/0,
    0/1,
    1.2/3,
    3.6/3,
    4.8/3
  }{
    \fill[red!80!black]
      (\x,\y) circle[radius=0.05];
  }

  \node[
    font=\scriptsize,
    anchor=north,
    yshift=-2pt
  ] at (4.8,0) {$0^{00}$};

  \node[
    font=\scriptsize,
    anchor=north,
    yshift=-2pt
  ] at (0,0) {$q^{00}$};

  \node[
    font=\scriptsize,
    anchor=west,
    xshift=-2pt
  ] at (4.8,3) {$0^{11}$};

\end{scope}

\begin{scope}[xshift=10.2cm]

  \foreach \name/\x in {
    3/0,
    2q-2/2.4,
    2q-1/3.6,
    0/4.8,
    1/6.0,
    2/7.2
  }{
    \foreach \y in {0,1,2,3}{
      \fill (\x,\y) circle[radius=0.03];
    }

    \draw[matchcolor!50,line width=0.8pt]
      (\x,0)--(\x,1);

    \draw[matchcolor!50,line width=0.8pt]
      (\x,2)--(\x,3);

    \node[blocklabel] at (\x,3.55) {$B_{\name}$};
  }

  \node[font=\Large] at (1.2,1.5) {$\cdots$};

  \draw[red!80!black,line width=1.15pt]
    (4.8,0)--(6.0,1)--(7.2,2);

  \draw[red!80!black,line width=1.15pt]
    (7.2,2) to[out=-90,in=-35] (0,0);

  \draw[
    red!80!black,
    densely dotted,
    line width=1.15pt
  ]
    (0.25,0)--(2.15,0);

  \draw[red!80!black,line width=1.15pt]
    (2.4,0)--(3.6,1)--(4.8,3)--(6.0,2);

  \foreach \x/\y in {
    4.8/0,
    6.0/1,
    7.2/2,
    0/0,
    2.4/0,
    3.6/1,
    4.8/3,
    6.0/2
  }{
    \fill[red!80!black]
      (\x,\y) circle[radius=0.05];
  }

  \node[
    font=\scriptsize,
    anchor=north,
    yshift=-2pt
  ] at (4.8,0) {$0^{00}$};

  \node[
    font=\scriptsize,
    anchor=west,
    xshift=-2pt
  ] at (6.0,1) {$1^{01}$};

  \node[
    font=\scriptsize,
    anchor=west,
    xshift=-2pt
  ] at (7.2,2) {$2^{10}$};

  \node[
    font=\scriptsize,
    anchor=north,
    yshift=-2pt
  ] at (0,0) {$3^{00}$};

  \node[
    font=\scriptsize,
    anchor=north,
    yshift=-2pt
  ] at (2.4,0) {$(2q-2)^{00}$};

  \node[
    font=\scriptsize,
    anchor=west,
    xshift=-2pt
  ] at (4.8,3) {$0^{11}$};

  \node[
    font=\scriptsize,
    anchor=west,
    xshift=-2pt
  ] at (6.0,2) {$1^{10}$};

\end{scope}

\end{tikzpicture}

\caption{
The left, middle, and right are 
$\sigma_{0^{10}}$, $\sigma_{0^{11}}$, and $\sigma_{1^{10}}$,
respectively.
}
\label{fig:basicpaths}
\end{figure}

Fix $d\in\{2, \ldots, q\}$; the target endpoint is now $d^{00}$. 

Suppose $d$ is even, and let $r=q+\frac d2$.
Define
\begin{equation*}
\begin{split}
\sigma_d^{\mathrm{even}}&:
  0^{00},(2q-1)^{10},[2q-2;r+1]_{-}^{11},r^{01},
  [r;2q-2]_{+}^{00},(2q-1)^{01},0^{11},1^{10},[2;d]_{+}^{00}.
\end{split}
\end{equation*}

Suppose $d\ge3$ is odd, and let $r=q+\frac{d+3}{2}$.
If $r\le 2q-2$, define
\begin{equation*}
\begin{split}
\sigma_d^{\mathrm{odd}}:
  0^{00},(2q-1)^{10},[2q-2;r+1]_{-}^{11},r^{01},
  [r;2q-2]_{+}^{00},(2q-1)^{01},
  [0;3]_{+}^{11},2^{01},
  [2;d]_{+}^{00}.
\end{split}
\end{equation*}
Otherwise, $r\geq 2q-1$, so $d\geq2q-5$.
Since $d\leq q$, we get $q\leq 5$. 
The only possible cases are $(q,d)\in\{(3,3),(4,3),(5,5)\}$.
Define
\begin{align*}
\sigma_{3}&:0^{00},0^{01},1^{10},2^{01},3^{11},4^{10},3^{01},3^{00} \\  
\sigma_{4}&:0^{00},1^{01},0^{10},0^{11},1^{10},2^{01},3^{11},4^{10},3^{01},3^{00} \\
\sigma_{5}&:0^{00},1^{01},2^{11},3^{11},2^{01},2^{00},3^{00},4^{01},
  5^{11},6^{10},5^{01},5^{00}.
\end{align*}

\begin{figure}[H]
\centering
\begin{minipage}{0.48\textwidth}
\centering
\begin{tikzpicture}[x=0.72cm,y=0.52cm,line cap=round,line join=round]
  \foreach \name/\x in {r/0,r+1/1.15,2q-2/3.45,2q-1/4.6,0/5.75,1/6.9,2/8.05,d/9.2}{
    \foreach \y in {0,1,2,3}{\fill (\x,\y) circle[radius=0.025];}
    \draw[matchcolor!50,line width=0.75pt] (\x,0)--(\x,1);
    \draw[matchcolor!50,line width=0.75pt] (\x,2)--(\x,3);
    \node[font=\scriptsize] at (\x,3.45) {$B_{\name}$};
  }
  \node[font=\Large] at (2.3,1.55) {$\cdots$};
  \node[font=\Large] at (8.63,1.55) {$\cdots$};
  \draw[red!80!black,line width=1.1pt] (5.75,0)--(4.6,2)--(3.45,3);
  \draw[red!80!black,densely dotted,line width=1.1pt] (3.20,3)--(1.40,3);
  \draw[red!80!black,line width=1.1pt] (1.15,3)--(0,1)--(0,0)--(1.15,0);
  \draw[red!80!black,densely dotted,line width=1.1pt] (1.40,0)--(3.20,0);
  \draw[red!80!black,line width=1.1pt] (3.45,0)--(4.6,1)--(5.75,3)--(6.9,2)--(8.05,0)--(9.2,0);
  \foreach \x/\y in {5.75/0,4.6/2,3.45/3,1.15/3,0/1,0/0,1.15/0,3.45/0,4.6/1,5.75/3,6.9/2,8.05/0,9.2/0}{\fill[red!80!black] (\x,\y) circle[radius=0.045];}
  \node[font=\scriptsize,anchor=north,yshift=-2pt] at (5.75,0) {$0^{00}$};
  \node[font=\scriptsize,anchor=east,xshift=2pt] at (0,1) {$r^{01}$};
  \node[font=\scriptsize,anchor=north,yshift=-2pt] at (0,0) {$r^{00}$};
  \node[font=\scriptsize,anchor=west,xshift=-2pt] at (5.75,3) {$0^{11}$};
  \node[font=\scriptsize,anchor=west,xshift=-2pt] at (6.9,2) {$1^{10}$};
  \node[font=\scriptsize,anchor=north,yshift=-2pt] at (9.2,0) {$d^{00}$};
\end{tikzpicture}
\end{minipage}\hfill
\begin{minipage}{0.48\textwidth}
\centering
\begin{tikzpicture}[x=0.72cm,y=0.52cm,line cap=round,line join=round]
  \foreach \name/\x in {r/0,r+1/1.1,2q-2/3.3,2q-1/4.4,0/5.5,1/6.6,2/7.7,3/8.8,d/10.1}{
    \foreach \y in {0,1,2,3}{\fill (\x,\y) circle[radius=0.025];}
    \draw[matchcolor!50,line width=0.75pt] (\x,0)--(\x,1);
    \draw[matchcolor!50,line width=0.75pt] (\x,2)--(\x,3);
    \node[font=\scriptsize] at (\x,3.45) {$B_{\name}$};
  }
  \node[font=\Large] at (2.2,1.55) {$\cdots$};
  \node[font=\Large] at (9.45,1.55) {$\cdots$};
  \draw[red!80!black,line width=1.1pt] (5.5,0)--(4.4,2)--(3.3,3);
  \draw[red!80!black,densely dotted,line width=1.1pt] (3.05,3)--(1.35,3);
  \draw[red!80!black,line width=1.1pt] (1.1,3)--(0,1)--(0,0)--(1.1,0);
  \draw[red!80!black,densely dotted,line width=1.1pt] (1.35,0)--(3.05,0);
  \draw[red!80!black,line width=1.1pt] (3.3,0)--(4.4,1)--(5.5,3)--(6.6,3)--(7.7,3)--(8.8,3)--(7.7,1)--(7.7,0)--(8.8,0)--(10.1,0);
  \foreach \x/\y in {5.5/0,4.4/2,3.3/3,1.1/3,0/1,0/0,1.1/0,3.3/0,4.4/1,5.5/3,6.6/3,7.7/3,8.8/3,7.7/1,7.7/0,8.8/0,10.1/0}{\fill[red!80!black] (\x,\y) circle[radius=0.045];}
  \node[font=\scriptsize,anchor=north,yshift=-2pt] at (5.5,0) {$0^{00}$};
  \node[font=\scriptsize,anchor=east,xshift=2pt] at (0,1) {$r^{01}$};
  \node[font=\scriptsize,anchor=north,yshift=-2pt] at (0,0) {$r^{00}$};
  \node[font=\scriptsize,anchor=west,xshift=-2pt] at (7.7,1) {$2^{01}$};
  \node[font=\scriptsize,anchor=north,yshift=-2pt] at (7.7,0) {$2^{00}$};
  \node[font=\scriptsize,anchor=north,yshift=-2pt] at (10.1,0) {$d^{00}$};
\end{tikzpicture}
\end{minipage}

\vspace{0.6em}

\begin{minipage}{0.32\textwidth}
\centering
\begin{tikzpicture}[x=0.75cm,y=0.55cm,line cap=round,line join=round]
  \foreach \name/\x in {0/0,1/1.1,2/2.2,3/3.3,4/4.4}{
    \foreach \y in {0,1,2,3}{\fill (\x,\y) circle[radius=0.025];}
    \draw[matchcolor!50,line width=0.75pt] (\x,0)--(\x,1);
    \draw[matchcolor!50,line width=0.75pt] (\x,2)--(\x,3);
    \node[font=\scriptsize] at (\x,3.45) {$B_{\name}$};
  }
  \draw[red!80!black,line width=1.1pt] (0,0)--(0,1)--(1.1,2)--(2.2,1)--(3.3,3)--(4.4,2)--(3.3,1)--(3.3,0);
  \foreach \x/\y in {0/0,0/1,1.1/2,2.2/1,3.3/3,4.4/2,3.3/1,3.3/0}{\fill[red!80!black] (\x,\y) circle[radius=0.045];}
\end{tikzpicture}
\end{minipage}\hfill
\begin{minipage}{0.32\textwidth}
\centering
\begin{tikzpicture}[x=0.75cm,y=0.55cm,line cap=round,line join=round]
  \foreach \name/\x in {0/0,1/1.1,2/2.2,3/3.3,4/4.4}{
    \foreach \y in {0,1,2,3}{\fill (\x,\y) circle[radius=0.025];}
    \draw[matchcolor!50,line width=0.75pt] (\x,0)--(\x,1);
    \draw[matchcolor!50,line width=0.75pt] (\x,2)--(\x,3);
    \node[font=\scriptsize] at (\x,3.45) {$B_{\name}$};
  }
  \draw[red!80!black,line width=1.1pt] (0,0)--(1.1,1)--(0,2)--(0,3)--(1.1,2)--(2.2,1)--(3.3,3)--(4.4,2)--(3.3,1)--(3.3,0);
  \foreach \x/\y in {0/0,1.1/1,0/2,0/3,1.1/2,2.2/1,3.3/3,4.4/2,3.3/1,3.3/0}{\fill[red!80!black] (\x,\y) circle[radius=0.045];}
\end{tikzpicture}
\end{minipage}\hfill
\begin{minipage}{0.32\textwidth}
\centering
\begin{tikzpicture}[x=0.67cm,y=0.55cm,line cap=round,line join=round]
  \foreach \name/\x in {0/0,1/1.0,2/2.0,3/3.0,4/4.0,5/5.0,6/6.0}{
    \foreach \y in {0,1,2,3}{\fill (\x,\y) circle[radius=0.025];}
    \draw[matchcolor!50,line width=0.75pt] (\x,0)--(\x,1);
    \draw[matchcolor!50,line width=0.75pt] (\x,2)--(\x,3);
    \node[font=\scriptsize] at (\x,3.45) {$B_{\name}$};
  }
  \draw[red!80!black,line width=1.05pt] (0,0)--(1.0,1)--(2.0,3)--(3.0,3)--(2.0,1)--(2.0,0)--(3.0,0)--(4.0,1)--(5.0,3)--(6.0,2)--(5.0,1)--(5.0,0);
  \foreach \x/\y in {0/0,1.0/1,2.0/3,3.0/3,2.0/1,2.0/0,3.0/0,4.0/1,5.0/3,6.0/2,5.0/1,5.0/0}{\fill[red!80!black] (\x,\y) circle[radius=0.042];}
\end{tikzpicture}
\end{minipage}
\caption{The top left and top right are $\sigma^{\mathrm{even}}_d$ and $\sigma^{\mathrm{odd}}_d$, respectively. 
The lower left, lower middle, lower right are $\sigma_{3},\sigma_{4}, \sigma_{5}$, respectively. }
\label{fig:morepaths}
\end{figure}

\begin{lemma}\label{lem:routing}
Whenever defined, each sequence $\sigma_{0^{10}}, \sigma_{0^{11}}, \sigma_{1^{10}}, \sigma^{\mathrm{even}}_{d}, \sigma^{\mathrm{odd}}_{d}, \sigma_{3}, \sigma_{4}, \sigma_{5}$ is an induced path on $2q+2$ vertices. 
\end{lemma}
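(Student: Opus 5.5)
The plan is a direct verification organised around one adjacency criterion. By the definition of $G_q$, two distinct vertices $i^{ab}$ and $j^{cd}$ are adjacent if and only if one of the following holds:
\begin{itemize}
\item $i=j$ and $a=c$ (a matching edge);
\item $j=i+1$ and $b=c$;
\item $i=j+1$ and $d=a$.
\end{itemize}
In particular, vertices in blocks at cyclic distance at least $2$ are never adjacent. So for each sequence $\sigma$ I will check three things.
\begin{enumerate}
\item $\sigma$ lists $2q+2$ distinct vertices. This is a count of the interval pieces, e.g.\ $1+(2q-2)+3$ for $\sigma_{0^{10}}$.
\item Each consecutive pair satisfies the criterion.
\item No non-consecutive pair does. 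For this it suffices to examine pairs lying in the same block or in cyclically consecutive blocks.
\end{enumerate}
To organise the third check, I will record for every block $B_i$ the set of path vertices it contains and their positions in $\sigma$. Only blocks holding two or more path vertices, together with their neighbouring blocks, need any work.

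For the interval pieces I will use the remark before the lemma. A run $[r;s]_{\pm}^{\beta}$ with $\beta\in\{00,11\}$ is an induced path, since $i^{\beta}(i+1)^{\beta}$ is a cross edge when both bits of $\beta$ agree. Moreover, the $00$-run and the $11$-run are mutually non-adjacent, because $i^{11}\not\sim (i+1)^{00}$, $i^{00}\not\sim(i+1)^{11}$, and $i^{00}\not\sim i^{11}$.

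This disposes of the long parallel stretches in $\sigma_{0^{11}}$, $\sigma^{\mathrm{even}}_d$ and $\sigma^{\mathrm{odd}}_d$, where both strands run through the blocks $r+1,\dots,2q-2$ (or $q+1,\dots,2q-1$). What remains are the junction vertices: $q^{01}$, $r^{01}$, $(2q-1)^{10}$, $(2q-1)^{01}$, $0^{11}$, $1^{10}$, $2^{01}$, $2^{10}$. For each of these I will list its five neighbours from the criterion and confirm that exactly its two path-neighbours (or one, at an endpoint) lie on $\sigma$. As an example, $r^{01}$ has matching neighbour $r^{00}$, neighbours $(r-1)^{a0}$ in $B_{r-1}$, and neighbours $(r+1)^{1c}$ in $B_{r+1}$. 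Of these, only $r^{00}$ and $(r+1)^{11}$ are on the path, since $B_{r-1}$ contains no path vertex when $r-1>d$.

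Before that I will verify the index ranges. For $\sigma^{\mathrm{even}}_d$ we have $r=q+d/2$, so $q+1\le r\le 2q-2$ when $d\le q$ and $q\ge 4$. The case $q=3$ forces $d=2$, giving $r=4$, and that boundary case will be checked directly. For $\sigma^{\mathrm{odd}}_d$ we have $r\le 2q-2$ by hypothesis. I will also count vertices:
\begin{itemize}
\item $\sigma^{\mathrm{even}}_d$: $1+1+(2q-2-r)+1+(2q-1-r)+3+(d-1)$, which equals $2q+2$ using $2r=2q+d$;
\item $\sigma^{\mathrm{odd}}_d$: similarly, using $2r=2q+d+3$.
\end{itemize}
The one genuine separation condition is that the forward run $[2;d]^{00}$ ends at block $d$, while the backward material starts at block $r>d+1$. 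Hence $B_d$ and $B_r$ are at cyclic distance at least $2$ on both sides: $r-d\ge 2$, and $2q-r+d\ge 2$ because $r\le 2q-2$. So those two parts cannot interact.

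The main obstacle I expect is the wraparound region around $B_{2q-1},B_0,B_1,B_2,B_3$. There several path vertices share a block, e.g.\ $0^{00}$ and $0^{11}$, or $2^{01},2^{00}$ together with $2^{11}$ in $\sigma^{\mathrm{odd}}_d$. Moreover, the small-$q$ identifications, such as $2q-1$ being close to $d$ or $r$, could create unexpected chords. I will handle this by writing out, for each sequence, a small table of the path vertices in $B_{2q-1}\cup B_0\cup\dots\cup B_3$ and checking all pairs against the criterion. The three sporadic paths $\sigma_3,\sigma_4,\sigma_5$ involve only $8$, $10$ and $12$ vertices in blocks $0$ to $6$, with $2q=6,8,10$ respectively. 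For these I will simply check all pairs in the same or adjacent blocks by hand, paying attention to cyclic adjacency: $B_5\sim B_0$ when $q=3$, and $B_7\sim B_0$ when $q=4$.
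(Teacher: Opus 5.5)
Your proposal is correct and is essentially the paper's argument: both check only same-block and consecutive-block pairs. The paper packages the chord check as a tally of induced matching and cross edges per block and connection summing to exactly $2q+1$, which together with adjacency of consecutive vertices forces the induced subgraph to be the path; you instead rule out chords pairwise at the junction vertices and in the wraparound region. One bookkeeping remark: the empty-run case $r=2q-2$, where the path-neighbour of $r^{01}$ in $B_{r+1}$ is $(2q-1)^{10}$ rather than $(r+1)^{11}$, arises not only for $q=3$ but also for $(q,d)=(4,4)$, and for $\sigma^{\mathrm{odd}}_d$ when $(q,d)\in\{(5,3),(6,5),(7,7)\}$; your neighbour-listing check handles these without change.
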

\begin{proof}
It is easy to check that each sequence has $2q+2$ vertices.
Since edges of $G_q$ are only within blocks and in connections, we only need to check edges and non-edges within a block and in connections. 

Consider $\sigma_{0^{10}}$.
It has only one matching edge, which is $0^{10}0^{11}$.
It also has exactly one cross edge in every connection. 

Consider $\sigma_{0^{11}}$.
It has only one matching edge, which is $q^{00}q^{01}$.  
It also has exactly two cross edges in the $i$th connection where $i\in\{q, \ldots, 2q-1\}$. 

Consider $\sigma_{1^{10}}$.
It has no matching edges. 
It has exactly one cross edge in every connection, except for the $0$th connection where it has exactly two cross edges. 

Consider  $\sigma_d^{\mathrm{even}}$.
It has exactly one matching edge $r^{00}r^{01}$. 
It also has exactly one cross edge in the $i$th connection where $i\in\{0, \ldots, d-1\}$ and exactly two cross edges in the $i$th connection where $i\in\{r, \ldots, 2q-1\}$. 

Consider  $\sigma_d^{\mathrm{odd}}$.
It has exactly two matching edges $r^{00}r^{01}$ and $2^{00}2^{01}$. 
It also has exactly one cross edge in the $i$th connection where $i\in\{0, \ldots, d-1\}\setminus\{2\}$, exactly two cross edges in the $i$th connection where $i\in\{r, \ldots, 2q-1\}$, and exactly three cross edges in the 2nd connection. 

Each of $\sigma_3, \sigma_4, \sigma_5$ has two matching edges. 
$\sigma_3$ has exactly one cross edge in the $i$th connection where $i\in\{0,1, 2\}$ and exactly two cross edges in the 3rd connection. 
$\sigma_4$ has exactly one cross edge in the $1$st and 2nd connection, exactly two cross edges in the 3rd connection, and exactly three cross edges in the 0th connection. 
$\sigma_5$ has exactly one cross edge in the $0$th, 1st, 3rd, 4th connection, exactly two cross edges in the 5th connection, and exactly three cross edges in the 2nd connection. 

For every sequence $\sigma$, consecutive vertices in $\sigma$ are adjacent.
Moreover, since edges of $G_q$ are either  within a block or between consecutive blocks, the matching edges and cross edges listed above are all the edges induced by $\sigma$.
In each case, there are exactly  $2q+1$ edges.
Hence each sequence is an induced path on $2q+2$ vertices.
\end{proof}

\begin{proposition}\label{prop:add}
Adding any non-edge to $G_q$ creates an induced $C_{2q+2}$.
\end{proposition}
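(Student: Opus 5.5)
The plan is to reduce to the finitely many normalized non-edges and then use the induced paths of \Cref{lem:routing}. Let $uv$ be a non-edge of $G_q$, and let $G_q+uv$ be the graph obtained by adding it. By \Cref{cor:normalization}, there is a composition $\phi$ of maps $\rho_t$ and $\rho_\varepsilon$ sending $\{u,v\}$ to one of the pairs
\[
\{0^{00},0^{10}\},\quad \{0^{00},0^{11}\},\quad \{0^{00},1^{10}\},\quad \{0^{00},d^{00}\}\ (2\le d\le q).
\]
By \Cref{lem:relabelings}, $\phi$ is an automorphism of $G_q$. Hence $\phi$ is also an isomorphism from $G_q+uv$ to $G_q+\phi(u)\phi(v)$. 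Since induced copies of $C_{2q+2}$ are preserved under isomorphism, it suffices to treat these normalized pairs.

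For each normalized pair $\{0^{00},w\}$, I choose the sequence $\sigma$ from the preceding definitions whose endpoints are $0^{00}$ and $w$:
\begin{itemize}
\item $\sigma_{0^{10}}$, $\sigma_{0^{11}}$ and $\sigma_{1^{10}}$ for the first three pairs;
\item for $w=d^{00}$: $\sigma_d^{\mathrm{even}}$ when $d$ is even, and $\sigma_d^{\mathrm{odd}}$ when $d$ is odd and $r=q+\frac{d+3}{2}\le 2q-2$;
\item in the remaining cases $(q,d)\in\{(3,3),(4,3),(5,5)\}$, the sequences $\sigma_3$, $\sigma_4$, $\sigma_5$ respectively.
\end{itemize}
The earlier case analysis shows these exceptional cases are exhaustive. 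I then check from the displayed formulas that each sequence starts at $0^{00}$ and ends at the correct $w$; this is immediate.

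By \Cref{lem:routing}, each such $\sigma$ is an induced path on $2q+2$ vertices in $G_q$ with endpoints $0^{00}$ and $w$. Adding the edge $0^{00}w$ closes this path into a cycle on the same $2q+2$ vertices. This cycle is induced in $G_q+0^{00}w$. Indeed, the only pair whose adjacency changed is $\{0^{00},w\}$, and the path had no chords in $G_q$. So the vertex set of $\sigma$ induces a $C_{2q+2}$ after the addition.

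There is no substantial obstacle, since all the combinatorial work is already in \Cref{lem:routing}. The only point needing care is the bookkeeping of the reduction. First, the case split on $d$ (even; odd with $r\le 2q-2$; the three exceptional cases) must cover every $d\in\{2,\dots,q\}$. Second, the normalization must carry an arbitrary non-edge to one of the listed pairs in the orientation with $0^{00}$ as one endpoint. Both facts follow directly from \Cref{cor:normalization} and the case analysis preceding the definitions of $\sigma_3,\sigma_4,\sigma_5$.
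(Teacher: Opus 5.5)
Your proposal is correct and follows the paper's proof: you normalize the non-edge with \Cref{cor:normalization}, using that the relabelings are automorphisms by \Cref{lem:relabelings}, and then close the induced path on $2q+2$ vertices from \Cref{lem:routing} with the added edge. Your additional bookkeeping makes explicit what the paper leaves implicit, namely the isomorphism $G_q+uv\cong G_q+\phi(u)\phi(v)$, the assignment of $\sigma_3,\sigma_4,\sigma_5$ to $(q,d)=(3,3),(4,3),(5,5)$, and why the closed cycle has no chords.
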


\begin{proof}
Let $uv$ be a non-edge. 
By \Cref{cor:normalization} we may assume $u=0^{00}$ and $v\in\{0^{10}, 0^{11}, 1^{10}, d^{00}\}$ where $d\in\{2, \ldots, q\}$. 
By \Cref{lem:routing}, there is an induced path on $2q+2$ vertices between $u$ and $v$.  
Thus, adding $uv$ creates an induced $C_{2q+2}$.
\end{proof}

\section{Deleting an edge}
\label{sec:deleteedge}

Recall that there are two types of edges by \Cref{cor:normalization}.

\subsection*{Deleting a matching edge}

From $G_q$, delete a matching edge, which we may assume is $0^{00}0^{01}$.  
Consider the vertex sequence 
\begin{equation}
\sigma_M:
  0^{00},(2q-1)^{00},0^{01},1^{10},
  [2;q]_{+}^{00},(q-1)^{10},[q-2;2]_{-}^{11},1^{01}. 
\label{eq:deleteM}
\end{equation}
Note that $\sigma_M$ has  $4+(q-1)+1+(q-3)+1=2q+2$ vertices.
Moreover, there is exactly one vertex in each of $B_{2q-1}$ and $B_q$, and exactly two vertices in each of $B_0, \ldots, B_{q-1}$.

Consecutive vertices of $\sigma_M$ are adjacent, including the last and first vertex.
Moreover, in every connection, the vertices of $\sigma_M$ induce either zero or two cross edges; if two cross edges are induced, then they are the two edges joining consecutive vertices of $\sigma_M$.
Since $0^{00}0^{01}$ is deleted,  vertices of $\sigma_M$ induce no edges within a block.
Hence $\sigma_M$ is an induced cycle of length $2q+2$ after $0^{00}0^{01}$ is deleted.

\begin{figure}[H]
\centering
\begin{minipage}{0.48\textwidth}
\centering
\begin{tikzpicture}[x=0.86cm,y=0.58cm,line cap=round,line join=round]
  \foreach \name/\x in {2q-1/0,0/1.2,1/2.4,2/3.6,q-2/5.4,q-1/6.6,q/7.8}{
    \foreach \y in {0,1,2,3}{ \fill (\x,\y) circle[radius=0.03]; }
    \draw[matchcolor!50,line width=0.8pt] (\x,0)--(\x,1);
    \draw[matchcolor!50,line width=0.8pt] (\x,2)--(\x,3);
    \node[font=\scriptsize] at (\x,3.55) {$B_{\name}$};
  }
  \node[font=\Large] at (4.5,1.5) {$\cdots$};
  \draw[red!80!black,line width=1.15pt] (1.2,0)--(0,0)--(1.2,1)--(2.4,2)--(3.6,0);
  \draw[red!80!black,densely dotted,line width=1.15pt] (3.85,0)--(7.55,0);
  \draw[red!80!black,line width=1.15pt] (7.8,0)--(6.6,2)--(5.4,3);
  \draw[red!80!black,densely dotted,line width=1.15pt] (5.15,3)--(3.85,3);
  \draw[red!80!black,line width=1.15pt] (3.6,3)--(2.4,1)--(1.2,0);
  \draw[red!80!black,dashed,line width=1.0pt] (1.2,0)--(1.2,1);
  \foreach \x/\y in {0/0,1.2/0,1.2/1,2.4/1,2.4/2,3.6/0,3.6/3,5.4/3,6.6/2,7.8/0}{
    \fill[red!80!black] (\x,\y) circle[radius=0.05];
  }
  \node[font=\scriptsize,anchor=north,yshift=-2pt] at (0,0) {$(2q{-}1)^{00}$};
  \node[font=\scriptsize,anchor=north,yshift=-2pt] at (1.2,0) {$0^{00}$};
  \node[font=\scriptsize,anchor=west,xshift=-2pt] at (1.2,1) {$0^{01}$};
  \node[font=\scriptsize,anchor=west,xshift=-2pt] at (2.4,2) {$1^{10}$};
  \node[font=\scriptsize,anchor=west,xshift=-2pt] at (2.4,1) {$1^{01}$};
  \node[font=\scriptsize,anchor=north,yshift=-2pt] at (3.6,0) {$2^{00}$};
  \node[font=\scriptsize,anchor=west,xshift=-2pt] at (3.6,3) {$2^{11}$};
  \node[font=\scriptsize,anchor=north,yshift=-2pt] at (7.8,0) {$q^{00}$};
\end{tikzpicture}
\end{minipage}\hfill
\begin{minipage}{0.48\textwidth}
\centering
\begin{tikzpicture}[x=0.86cm,y=0.58cm,line cap=round,line join=round]
  \foreach \name/\x in {q+2/0,q+3/1.2,2q-1/3.5,0/4.8,1/6.0,2/7.2}{
      \foreach \y in {0,1,2,3}{\fill (\x,\y) circle[radius=0.03];}
      \draw[matchcolor!50,line width=0.8pt] (\x,0)--(\x,1);
      \draw[matchcolor!50,line width=0.8pt] (\x,2)--(\x,3);
      \node[font=\scriptsize] at (\x,3.55) {$B_{\name}$};
  }
  \node[font=\Large] at (2.35,1.5) {$\cdots$};
  \draw[red!80!black,line width=1.2pt] (4.8,0)--(3.5,0);
  \draw[red!80!black,line width=1.2pt] (3.5,0)--(2.75,0);
  \draw[red!80!black,densely dotted,line width=1.2pt] (2.75,0)--(1.45,0);
  \draw[red!80!black,line width=1.2pt] (1.2,0)--(0,0)--(0,1)--(1.2,3);
  \draw[red!80!black,densely dotted,line width=1.2pt] (1.45,3)--(2.75,3);
  \draw[red!80!black,line width=1.2pt] (2.75,3)--(3.5,3)--(4.8,3)--(6.0,2)--(7.2,0);
  \draw[red!80!black,line width=1.2pt] (7.2,0)--(6.0,0)--(6.0,1)--(4.8,0);
  \draw[red!80!black,dashed,line width=1.1pt] (4.8,0)--(6.0,0);
  \foreach \x/\y in {0/0,0/1,1.2/3,3.5/0,3.5/3,4.8/0,4.8/3,6.0/0,6.0/1,6.0/2,7.2/0}{
    \fill[red!80!black] (\x,\y) circle[radius=0.05];
  }
  \node[font=\scriptsize,anchor=north,yshift=-2pt] at (0,0) {$(q{+}2)^{00}$};
  \node[font=\scriptsize,anchor=north,yshift=-2pt] at (4.8,0) {$0^{00}$};
  \node[font=\scriptsize,anchor=west,xshift=-2pt] at (4.8,3) {$0^{11}$};
  \node[font=\scriptsize,anchor=west,xshift=-2pt] at (6.0,2) {$1^{10}$};
  \node[font=\scriptsize,anchor=north,yshift=-2pt] at (7.2,0) {$2^{00}$};
  \node[font=\scriptsize,anchor=north,yshift=-2pt] at (6.0,0) {$1^{00}$};
  \node[font=\scriptsize,anchor=west,xshift=-2pt] at (6.0,1) {$1^{01}$};
\end{tikzpicture}
\end{minipage}
\caption{The left and right are when deleting a matching edge and a cross edge, respectively.
The dashed edge is the deleted edge.}
\label{fig:deletecertificates}
\end{figure}

\subsection*{Deleting a cross edge}

From $G_q$, delete a cross edge, which we may assume is $0^{00}1^{00}$.  
Consider the vertex sequence 
\begin{equation}
  \sigma_C: 0^{00},[2q-1;q+2]_{-}^{00},(q+2)^{01},
  [q+3;2q-1]_{+}^{11}, 0^{11},1^{10},2^{00},1^{00},1^{01}.
  \label{eq:deleteS}
\end{equation}
Note that $\sigma_C$ contains $1+(q-2)+1+(q-3)+5=2q+2$ vertices.
Moreover, there is exactly one vertex in $B_2$, exactly three vertices in $B_1$, and exactly two vertices in each of $B_0, B_{2q-1}, \ldots, B_{q+2}$.

Consecutive vertices of $\sigma_C$  are adjacent, including the last and first vertex.
Since $0^{00}1^{00}$ is deleted, in every connection the vertices of $\sigma_C$ induce either zero or two cross edges; if two cross edges are induced, then they are the two edges joining consecutive vertices of $\sigma_C$.
Within the blocks, vertices of $\sigma_C$ induce only edges $1^{00}1^{01}$ and $(q+2)^{00}(q+2)^{01}$.
Hence $\sigma_C$ is an induced cycle of length $2q+2$ after $0^{00}1^{00}$ is deleted.

\begin{proof}[Proof of \Cref{thm:main}]
Fix an integer $q\ge3$.
By \Cref{prop:free}, the graph $G_q$ contains no induced copy of
$C_{2q+2}$.
By \Cref{prop:add}, adding any non-edge to $G_q$ creates an induced copy of
$C_{2q+2}$.

It remains to consider deleting an edge.
By \Cref{cor:normalization}, we may assume that the deleted edge is either the matching edge $0^{00}0^{01}$ or the cross edge $0^{00}1^{00}$.
The constructions in \eqref{eq:deleteM} and \eqref{eq:deleteS} show that deleting either of these edges creates an induced copy of $C_{2q+2}$.

Hence $G_q$ is $C_{2q+2}$-induced-saturated.
\end{proof}

\section*{Acknowledgments}
The author acknowledges the use of ChatGPT as an exploratory tool for
discussions on improving the initial constructions, checking case analyses,
and improving the exposition, including assistance with  first drafts of the figures.
All mathematical arguments were independently verified by the author,
who takes full responsibility for their correctness.


\end{document}